\pgfplotsset{compat=newest}
\pgfplotsset{plot coordinates/math parser=false}
\newlength\figureheight
\newlength\figurewidth
\newtheorem{theorem}{Theorem}[section]
\newtheorem{definition}[theorem]{Definition}
\newtheorem{corollary}[theorem]{Corollary}
\newtheorem{lemma}[theorem]{Lemma}
\theoremstyle{definition}
\newtheorem{example}[theorem]{Example}
\newtheorem{remark}[theorem]{Remark}
\def\N{{\mathbb N}}
\def\Z{{\mathbb Z}}
\def\Q{{\mathbb Q}}
\def\R{{\mathbb R}}
\def\F{{\mathbb F}}
\newsavebox\myboxA
\newsavebox\myboxB
\newlength\mylenA
\newcommand*\xoverline[2][0.75]{%
    \sbox{\myboxA}{$\m@th#2$}%
    \setbox\myboxB\null
    \ht\myboxB=\ht\myboxA%
    \dp\myboxB=\dp\myboxA%
    \wd\myboxB=#1\wd\myboxA
    \sbox\myboxB{$\m@th\overline{\copy\myboxB}$}
    \setlength\mylenA{\the\wd\myboxA}
    \addtolength\mylenA{-\the\wd\myboxB}%
    \ifdim\wd\myboxB<\wd\myboxA%
       \rlap{\hskip 0.5\mylenA\usebox\myboxB}{\usebox\myboxA}%
    \else
        \hskip -0.5\mylenA\rlap{\usebox\myboxA}{\hskip 0.5\mylenA\usebox\myboxB}%
    \fi}
\newcommand{\PPFp}{\mathbb{F}_p((t^{1/p^{\infty}}))}
\newcommand{\FpbarQ}{\bar{\mathbb{F}}_p((t^{\mathbb{Q}}))}
\newcommand{\FbarQ}{\bar{\mathbb{F}}((t^{\mathbb{Q}}))}
\newcommand{\FpQ}{\mathbb{F}_p((t^{\mathbb{Q}}))}
\newcommand{\Fp}{\mathbb{F}_p}
\newcommand{\Fpbar}{\bar{\mathbb{F}}_p}
\newcommand{\Fbar}{\bar{\mathbb{F}}}
\newcommand\maxram{\textsc{maximal\_ramification}}
\newcommand\maxexp{\textsc{maximal\_expansion}}
\newcommand\addpol{\textsc{additive\_multiple}}
\title{Approximation and algebraicity in positive characteristic Hahn fields}
\author{Victor Lisinski\thanks{The author was funded by an EPSRC award at the University of Oxford, with additional support from the Royal Swedish Academy of Sciences and Corpus Christi College Oxford.}}
\date{}
\newcommand{\Addresses}{{
  \vspace*{3em}
  \footnotesize

\textsc{Mathematical Institute, Woodstock Road, Oxford OX2 6GG.}\par\nopagebreak
  \textit{E-mail address}: \texttt{lisinski@maths.ox.ac.uk}
}}
\begin{document}

\abstract{
We study the relative algebraic closure $K$ of $\Fpbar((t))$ inside $\FpbarQ$. We show that the supports of elements in $K$ have order type strictly less than $\omega^\omega$. We also recover a theorem by Rayner giving a bound to the ramification away from $p$ in the support of elements in $K$, and an analogue of Rayner's result for the residue field. This work has applications to the decidability of the first order theory of $\FpQ$, and other tame fields, in the language of valued fields with a constant symbol for $t$.}

\maketitle

\section{Introduction}
\label{sec: intro}
    
It was shown by Puiseux, and implicitly apparent in work by Newton, that when $\F$ has characteristic $0$ then the algebraic closure of $\Fbar((t))$ is the direct limit of $(\Fbar((t^{1/n})))_{n\in\N}$ \cite[p. 295]{MR1322960}. Another way to formulate this fact is by the following two properties about the relative algebraic closure $K$ of $\Fbar((t))$ in the Hahn field $\FbarQ$:
\begin{enumerate}
    \item The support of elements in $K$ have order type at most $\omega$.
    \item Elements in $K$ cannot have infinite ramification in the support, in the sense made more precise in Definition \ref{def: ramification in element}.
\end{enumerate}
Note that the first property implies that no element in $K$ can have bounded support, since this would give an element with support of order type $\omega+1$ by adding a suitable power of $t$. Both these properties are contradicted in characteristic $p$, for example by the generalised power series $1+\sum_{n\in\N}t^{-1/p^n}$, which is a root to the Artin-Schreier polynomial $X^p-X-1/t$, as shown in \cite{MR80647}. In this paper we use a transfinite approximation method to obtain the following modified properties in positive characteristic:
\begin{enumerate}
    \item The support of elements in $K$ have order type strictly smaller than $\omega^\omega$.
    \item Elements in $K$ cannot have infinite ramification away from $p$ in the support.
\end{enumerate}
The first item seems to not have been treated in the literature, though the order type of the support of elements in Hahn fields of characteristic zero is studied in \cite{MR3911738}. The second item is originally due to Rayner. Our method provides an alternative proof which also carries over to an analogue statement about the residue field. This is used in \cite{lisinski} to show that the first order theory of $\F((t^\Gamma))$ is decidable in the language of valued fields with a constant symbol for $t^\gamma$, with $\gamma\in\Gamma_{>0}$, when $\F$ is a perfect field of characteristic $p$ which is decidable in the language of rings and $\Gamma$ is a $p$-divisible ordered abelian group which is decidable in the language or ordered groups with a constant symbol for $\gamma$.

\section{Notation and preliminaries}
\label{sec: notation}
\begin{itemize}
\item For a field $K$, we denote its algebraic closure by $\bar{K}$.
\item For an ordered abelian group $(\Gamma,<)$ and an element $r\in\Gamma$, we write $\Gamma_{<r}=\{\gamma\in\Gamma \ | \ \gamma<r\}$ and $\Gamma_{\le r}=\Gamma_{<r}\cup\{r\}$.
\item For a field $K$ and an ordered abelian group $\Gamma$, we denote by $K((t^\Gamma))$ the Hahn field consisting of generalised power series on the form $x=\sum_{\gamma\in S_x}a_\gamma t^\gamma$, with $S_x$ a well ordered subset of $\Gamma$ and $a_\gamma\in K$. We write $K((t))$ instead of $K((t^\Z))$.
\item A Hahn field $K((t^\Gamma))$ is a valued field with the $t$-adic valuation, denoted by $v$, sending $\sum_{\gamma\in S}a_\gamma t^\gamma$ to the minimal element in the set $\{\gamma \ | \ a_\gamma\neq 0\}$. Such a minimum exists since $S$ is well ordered.
\item For a generalised power series $x=\sum_{\gamma\in\Gamma}a_\gamma t^\gamma$, we will interchangeably use the notations
\begin{align*}
x=\sum_{\gamma\ge\gamma_0}a_\gamma t^{\gamma}, \hspace{1.5em}  x=\sum_{i< \sigma}a_i t^{\gamma_i}    
\end{align*}
where $\gamma_0=v(x)$ and $\sigma$ is an ordinal. For $r\in\Gamma$, we write
\begin{align*}
x_{<r}=\sum_{\gamma\in\Gamma_{<r}}a_\gamma t^\gamma, \hspace{1.5em} x_{\le r}=\sum_{\gamma\in\Gamma_{\le r}}a_\gamma t^\gamma.
\end{align*}
Similarly, with the notation $x=\sum_{i< \sigma}a_i t^{\gamma_i}$, we write $x_{<i}=x_{<\gamma_i}$ and $x_{\le i}=x_{\le \gamma_i}$. 
\end{itemize}
\begin{definition}
We say that $y$ is an \textbf{approximation} of $x$ if $y=x_{<\lambda}$ for some $\lambda$ and $y\neq x$. We say that $y$ is a \textbf{better approximation} of $x$ than $z$ if $y$ is an approximation of $x$ and $z$ is an approximation of $y$.
\end{definition}

\begin{definition}
Let $K$ be a field and let $P(X)\in K[X]$. Then $P$ is called \textbf{additive} if $P(X+Y)=P(X)+P(Y)$ as polynomials in $X$ and $Y$. 
\end{definition}

Our use of additive polynomials relies completely on the following important result. It appears for example as Lemma 12.2.3 in \cite{MR1997038}.

\begin{lemma}[Ore's lemma]
\label{lem: ore}
Let $K$ be a field of characteristic $p$ and let $f(X)\in K[X]$. Then there is a non-zero additive polynomial $P(X)\in K[X]$ of degree $p^{\deg(f)}$ such that $f$ divides $P$.
\end{lemma}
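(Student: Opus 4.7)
The plan is to exploit the standard characterisation of additive polynomials over a field of characteristic $p$ as the $p$-polynomials, i.e., polynomials of the form $\sum_i a_i X^{p^i}$. I would either cite this or briefly verify it by expanding $P(X+Y)-P(X)-P(Y)$ in $K[X,Y]$ and using that $\binom{p^i}{j}\equiv 0 \pmod{p}$ for $0<j<p^i$. Granted this, constructing $P$ reduces to finding coefficients $a_0,\ldots,a_n\in K$, where $n=\deg f$, such that $\sum_{i=0}^{n} a_i X^{p^i}$ is divisible by $f$ and has degree exactly $p^n$.

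For divisibility, the natural move is to work in the quotient ring $K[X]/(f(X))$, which is a $K$-vector space of dimension $n$. The $n+1$ elements $X, X^p, X^{p^2},\ldots, X^{p^n}$ reduced modulo $f$ must therefore satisfy a non-trivial $K$-linear relation. Letting $m\le n$ be minimal such that $X,X^p,\ldots,X^{p^m}$ are linearly dependent modulo $f$, minimality forces any dependence relation $\sum_{i=0}^m a_i X^{p^i} \equiv 0 \pmod{f}$ to have $a_m\ne 0$. This yields a non-zero additive polynomial $P_0(X)=\sum_{i=0}^m a_i X^{p^i}$ of degree exactly $p^m$ that is divisible by $f$.

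The remaining step is to boost the degree from $p^m$ up to $p^n$ without destroying additivity or divisibility, and here the Frobenius does all the work. Setting $P(X) := P_0(X)^{p^{n-m}}$, the characteristic $p$ identity gives
\[
P(X) = \sum_{i=0}^m a_i^{p^{n-m}} X^{p^{i+n-m}},
\]
which is again a $p$-polynomial and hence additive, with leading term $a_m^{p^{n-m}} X^{p^n}$ and $a_m^{p^{n-m}}\ne 0$ since $a_m\ne 0$. Divisibility is preserved because $f\mid P_0\mid P_0^{p^{n-m}}=P$.

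No step really qualifies as a main obstacle: the dimension count is essentially pigeonhole, and the Frobenius trick is automatic once one thinks of it. The only point requiring any care is the characterisation of additive polynomials, which is what guarantees that a $K$-linear dependence among the powers $X^{p^i}$ actually assembles into an additive polynomial rather than some unrelated relation modulo $f$.
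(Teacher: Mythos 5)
Your proof is correct and rests on the same core argument as the paper's: the pigeonhole/dimension count on $X, X^p, \ldots, X^{p^{\deg f}}$ in the $\deg(f)$-dimensional space $K[X]/(f)$. The only difference is your extra care about the degree — taking a minimal dependence and raising it to a Frobenius power to force degree exactly $p^{\deg f}$ — which the paper's proof glosses over, since its linear relation need not have a nonzero coefficient on $X^{p^{\deg f}}$.
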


\begin{proof}
Consider $\{X^{p^i} \mod f(X) \ | \ 0\le i\le \deg(f) \}$ as a set of vectors in the $K$-vector space $K[X]/(f)$. By cardinality, these are linearly dependent over $K$. Thus, there are $a_i\in K$ such that
\[ \pushQED{\qed} \sum_{i=0}^{\deg(f)}a_iX^{p^i}\equiv 0 \mod f(X). \qedhere \popQED\]
\renewcommand{\qedsymbol}{}
\vspace{-\baselineskip}
\end{proof}

\begin{remark}
\label{rem: ore alg}
In the case of Ore's lemma when $K=\Fp(t)$, there is an algorithm that takes as input $f$ and returns the additive polynomial $P$. We denote this algorithm by $\addpol$. To see that this algorithm indeed exists, we note that we can use Euclid's algorithm to obtain $b_{k,i}\in \Fp(t)$ for $k<\deg(f)$ and $i\le \deg(f)$ such that
$$X^{p^i} \equiv \sum_{k=0}^{\deg(f)-1} b_{k,i}X^k.$$
We then let $(a_i)_{0\le i\le \deg(f)}$ be a non-trivial solution to the system of linear equations over $\F_p$ given by
$$\sum_{i=0}^{\deg(f)} a_i\sum_{k=0}^{\deg(f)-1}b_{k,i} X^k  \equiv 0 \mod f.$$
On input $f(X)$, the algorithm $\addpol$ thus returns $\sum_{i=0}^{\deg(f)}a_iX^{p^i}$.
\end{remark}

\section{Approximating roots}
\label{sec: approximation}
    
This section builds on a transfinite recursion procedure introduced in \cite{MR846958} for constructing roots in Hahn fields to polynomials over the underlying field of formal Laurent series. For this, we need a version of Taylor expansions in positive characteristic.

\begin{definition}
Let $K$ be a field and let
$$f(X)=\sum_{i=0}^n a_iX^i \in K[X].$$
Then the $k$-th Hasse derivative of $f$ is defined as
$$D^{(k)}f(X) := \sum_{i=k}^n  \binom{i}{k} a_i X^{i-k} \in K[X].$$
\end{definition}

\begin{remark}
    We will not consider derivatives of formal power series. Thus, with $K$ being a field of power series, $f\in K[X]$ and $x\in K$, we will write $D^{(k)}f(x)$ in place of $(D^{(k)}f)(x)$. We will also omit parenthesis in expressions of the form $(D^{(k)}f(x))g(X)$, since we will never consider expressions of the form $D^{(k)}(f(x)g(X))$.
\end{remark}

\begin{theorem}[Taylor expansion]
Let $K$ be a field and let $f(X)\in K[X]$ be of degree $n$ and let $\lambda \in K$. Then
$$f(X) = \sum_{k=0}^{n}D^{(k)}f(\lambda)(X-\lambda)^k.$$
\end{theorem}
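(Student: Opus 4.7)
The plan is to reduce the statement to the binomial theorem applied monomial by monomial. Since both sides of the claimed identity are $K$-linear in $f$, it suffices to prove the formula for $f(X) = X^i$ for each $0 \le i \le n$, and then sum with coefficients $a_i$.

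First I would compute $D^{(k)}(X^i) = \binom{i}{k}X^{i-k}$ directly from the definition, so that $D^{(k)}(X^i)(\lambda) = \binom{i}{k}\lambda^{i-k}$. The desired identity for the monomial $X^i$ therefore becomes
\[
X^i = \sum_{k=0}^{i}\binom{i}{k}\lambda^{i-k}(X-\lambda)^k,
\]
which is exactly the binomial expansion of $\bigl((X-\lambda)+\lambda\bigr)^i$. This settles the monomial case with no characteristic-dependent issues, since the binomial theorem holds in any commutative ring.

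For the general case, I would write $f(X) = \sum_{i=0}^n a_i X^i$, apply the monomial identity inside the sum, and swap the order of summation:
\[
f(X) = \sum_{i=0}^n a_i \sum_{k=0}^{i}\binom{i}{k}\lambda^{i-k}(X-\lambda)^k
     = \sum_{k=0}^n (X-\lambda)^k \sum_{i=k}^{n}\binom{i}{k} a_i \lambda^{i-k}.
\]
The inner sum on the right is precisely $D^{(k)}f(\lambda)$ by definition of the Hasse derivative, which completes the proof.

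There is essentially no serious obstacle here: the statement is a formal polynomial identity, independent of the characteristic of $K$, and the Hasse derivative is defined precisely so that the binomial coefficients $\binom{i}{k}$ appear without dividing by $k!$. The only bookkeeping point is that the interchange of summations is legitimate because both sums are finite, and the resulting index range $k \le i \le n$ matches the lower limit in the definition of $D^{(k)}f$.
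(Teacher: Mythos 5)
Your proof is correct and follows essentially the same route as the paper: reduce by $K$-linearity to the monomial case $X^i$ and apply the binomial theorem to $((X-\lambda)+\lambda)^i$, with the definition of the Hasse derivative supplying exactly the coefficients $\binom{i}{k}$. The extra bookkeeping about swapping finite sums is fine but not needed beyond the paper's appeal to linearity.
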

    
\begin{proof}
By linearity of the Hasse derivative, it is enough to show the statement for $f(X)=X^n$. We have
    
\begin{IEEEeqnarray*}{LL}
\sum_{k=0}^{n}D^{(k)}f(\lambda)(X-\lambda)^k &= \sum_{k=0}^n \binom{n}{k}\lambda^{n-k}(X-\lambda)^k \\
    &= ((X-\lambda)+\lambda)^n\\
    &=X^n,
\end{IEEEeqnarray*}
and we are done.
\end{proof}
    
To fix some notation, let $\F$ be an algebraically closed field of characteristic $p$, let $\Gamma$ be a divisible ordered abelian group, and let
$$f(X)=\sum_{i=0}^n a_iX^i \in \F((t^\Gamma))[X].$$
With $x \in \F((t^\Gamma))$ and $i\in\{1,\ldots,n\}$, we define 
$$g_{i,x}(X):=D^{(i)}f(x)X^i.$$
Let $b_{i,x}t^{\rho_{i,x}}$, with $b_{i,x}\in\F$, denote the initial term of $D^{(i)}f(x)$. To clarify the relationship between $g_{i,x}$ and $b_{i,x}t^{\rho_{i,x}}$, we have that $g_{i,x}(X)=(b_{i,x}t^{\rho_{i,x}}+z)X^i$, where $z\in\F((t^\Gamma))$ is of valuation strictly greater than $\rho_{i,x}$.

We now define the function $\gamma_{i,x} : \Gamma\cup\{\infty\}\to \Gamma\cup\{\infty\}$ by letting $\gamma_{i,x}(r)=\rho_{i,x}+ir$ for $r\in\Gamma$ and $\gamma_{i,x}(\infty)=\infty$. In particular, we get that $\gamma_{i,x}(r)=vg_{i,x}(y)$ for any $y\in\F((t^\Gamma))$ of valuation $r$. Let $\gamma_x(r):=\min_{i\in\{1,\ldots,n\}}\left\{\gamma_{i,x}(r)\right\}$ and let $J_x(r):=\left\{i\in\{1,\ldots,n\} \ | \ \gamma_{i,x}(r) = \gamma_x(r)\right\}$. By construction, we have that $vg_{i,x}(y)>\gamma_x(r)$ for any $i\notin J_x(r)$. This implies that
$$v\left(\sum_{i=1}^ng_{i,x}(\zeta t^r)\right)\ge \gamma_x(r),$$
with equality if and only if $\sum_{i\in J_x(r)} b_{i,x}\neq 0$.

Suppose now that $x$ is a root of $f$ and consider an approximation $w=x_{< \lambda}$ of $x$. We want to find $r\in\Gamma$ and $\zeta\in \F$ such that $w+\zeta t^r$ is a better approximation of some root $\alpha\in\F((t^\Gamma))$ of $f$ such that $\alpha_{<\lambda}=w$. To do this, we will find $\zeta$ and $r$ such that $vf(w)<vf(w+\zeta t^r)$, and then proceed by transfinite recursion. Let $r$ be such that $\gamma_w(r)=vf(w)$. For any $\zeta\in\F$, we have the equality
$$f(w+\zeta t^r) = f(w)+\sum_{i=1}^ng_{i,w}(\zeta t^r).$$
If $bt^{\gamma_w(r)}$ is the initial term of $f(w)$, we let $\zeta$ be such that 
\begin{align}
\label{eqn: approximation term}
    \sum_{i\in J_w(r)}b_{i,w}\zeta^i=-b
\end{align}
to get that $vf(w+\zeta t^r)>\gamma_w(r)$, as we wanted. We capture this procedure in the following definition, using the same notation as above.

\begin{definition}
\label{def: approximation term}
Let $f(X)\in\F((t^\Gamma))[X]$ and let $w\in\F((t^\Gamma))$ be the approximation of a root of $f$. We say that $\zeta t^r$ is an \textbf{approximation term} for $w$ with respect to $f$ if $w_{<r}=w$, if $\gamma_w(r)=vf(w)$, and if $\zeta\in\F$ satisfies the equality \textup{(\ref{eqn: approximation term})}.
\end{definition}

As mentioned, successively adding approximation terms eventually gives a root to $f$. More precisely, we define a transfinite recursion procedure as follows. Let $w_0 := w$. For any ordinal $\sigma> 0$ such that $f(w_\lambda)\neq 0$ for all $\lambda<\sigma$, let $$w_\sigma = w + \sum_{\lambda<\sigma}\zeta_\lambda t^{r_\lambda}$$
where $\zeta_\lambda t^{r_\lambda}$ is a fixed approximation term for $w_\lambda$ with respect to $f$. We then get the following lemma, which appears implicitly in \cite{MR846958}.

\begin{lemma}
\label{lem: approximation term converges}
There is an ordinal $\sigma$ such that $f(w_\sigma)=0$.
\end{lemma}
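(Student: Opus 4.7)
The plan is a cardinality argument: I show that the sequence of exponents $(r_\lambda)$ is strictly increasing in $\Gamma$, so the transfinite recursion defining $(w_\lambda)$ cannot run through more ordinal stages than there are elements of $\Gamma$. Since by construction the recursion continues precisely as long as $f(w_\lambda)\neq 0$, this forces the existence of some $\sigma$ with $f(w_\sigma)=0$.

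First I would verify that the construction is well-defined at every stage. At a successor $\lambda+1$ with $f(w_\lambda)\neq 0$, an approximation term $\zeta_\lambda t^{r_\lambda}$ exists by the construction carried out just before Definition \ref{def: approximation term}: pick $r$ with $\gamma_{w_\lambda}(r)=vf(w_\lambda)$, then solve the defining polynomial equation for $\zeta$, which is possible because $\F$ is algebraically closed. At a limit $\sigma$, the series $w_\sigma = w + \sum_{\lambda<\sigma}\zeta_\lambda t^{r_\lambda}$ lies in $\F((t^\Gamma))$ provided $\{r_\lambda : \lambda<\sigma\}$ is well-ordered, which is a consequence of the next step.

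The heart of the argument is the claim that $(r_\lambda)_{\lambda<\sigma}$ is strictly increasing in $\Gamma$, proved by transfinite induction. By Definition \ref{def: approximation term}, the approximation term for $w_\lambda$ satisfies $(w_\lambda)_{<r_\lambda}=w_\lambda$, so every element of the support of $w_\lambda$ is strictly smaller than $r_\lambda$. For any $\mu<\lambda$, the coefficient $\zeta_\mu$ is forced to be nonzero, since the right-hand side of \eqref{eqn: approximation term} is nonzero whenever $f(w_\mu)\neq 0$; hence $r_\mu$ lies in the support of $w_\lambda$, giving $r_\mu<r_\lambda$. Thus $\lambda\mapsto r_\lambda$ is an order-preserving injection of $\sigma$ into $\Gamma$. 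If no $\sigma$ satisfied $f(w_\sigma)=0$, the recursion would produce such an injection for ordinals of cardinality exceeding $|\Gamma|$, a contradiction.

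The step I expect to require the most care is the bookkeeping at limit ordinals: simultaneously checking that $w_\sigma$ is a legitimate Hahn series and that an approximation term for $w_\sigma$ still exists when $f(w_\sigma)\neq 0$. Once the strict monotonicity of $(r_\lambda)$ is built into the inductive hypothesis together with the well-ordering of $\supp(w_\sigma)$, the cardinality punchline is essentially immediate.
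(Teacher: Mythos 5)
Your proof is correct, but it runs through a different quantity than the paper's proof. The paper establishes the stronger monotonicity statement $vf(w_\sigma)>vf(w_\tau)$ for all $\sigma>\tau$: this requires the Taylor/Hasse expansion of $f(w_\sigma)$ around $w_{\tau+1}$, namely $f(w_\sigma)=f(w_{\tau+1})+\sum_{i=1}^n g_{i,w_{\tau+1}}\bigl(\sum_{\tau<\lambda<\sigma}\zeta_\lambda t^{r_\lambda}\bigr)$, together with the lower bound $\gamma_{w_{\tau+1}}(r_{\tau+1})=vf(w_{\tau+1})$ on the valuation of the correction term, and then concludes exactly as you do, by noting that $\Gamma$ admits no strictly increasing sequence of length of cardinality greater than $\lvert\Gamma\rvert$. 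You instead apply the cardinality argument to the exponents $r_\lambda$, whose strict monotonicity comes almost for free from Definition \ref{def: approximation term} (the condition $(w_\lambda)_{<r_\lambda}=w_\lambda$) once one knows $\zeta_\mu\neq 0$, which you correctly extract from the right-hand side $-b$ of \eqref{eqn: approximation term} being nonzero; the only real work is the no-cancellation bookkeeping showing $r_\mu\in\supp(w_\lambda)$, which your simultaneous induction (strict monotonicity plus $r_0$ exceeding $\supp(w)$) handles. Your route is more elementary, since it avoids the limit-stage valuation computation entirely, and it makes the well-orderedness of $\supp(w_\sigma)$ at limit stages transparent; the paper's route buys the extra information that the valuations $vf(w_\lambda)$ themselves strictly increase, which is the pseudo-convergence statement underlying the rest of Section \ref{sec: approximation}. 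Note also that both arguments, yours and the paper's, take for granted that an approximation term actually exists at every stage (the paper builds this into the recursion by fiat); your sketch of existence picks $r$ with $\gamma_{w_\lambda}(r)=vf(w_\lambda)$ and solves \eqref{eqn: approximation term} in the algebraically closed field $\F$, but does not verify the remaining requirement $(w_\lambda)_{<r}=w_\lambda$ of Definition \ref{def: approximation term} — this is the same level of rigor as the paper, so it is not a gap relative to it, but worth flagging.
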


\begin{proof}
It is enough to show that $vf(w_\sigma)>vf(w_\tau)$ for all ordinals $\sigma$ and $\tau$ such that $\sigma>\tau$. Indeed, if this inequality holds but no $w_\sigma$ is a root of $f$, then by taking $\sigma$ to be an ordinal of cardinality strictly greater than $\lvert\Gamma\rvert$, we get that $\left(vf(w_\lambda)\right)_{\lambda\le\sigma}$ is a strictly increasing sequence of cardinality strictly greater than $\lvert\Gamma\rvert$ in $\Gamma$. 

Let $\sigma>\tau$. We note that
$$w_\sigma-w_\tau=w+\sum_{\lambda<\sigma}\zeta_\lambda t^{r_\lambda}-\left(w+\sum_{\lambda<\tau}\zeta_\lambda t^{r_\lambda}\right)=\sum_{\tau\le \lambda <\sigma}\zeta_\lambda t^{r_\lambda}.$$
Therefore, we get
$$f(w_\sigma)=f\left(w_\tau+\sum_{\tau\le \lambda<\sigma}\zeta_\lambda t^{r_\lambda}\right)=f(w_{\tau+1})+\sum_{i=1}^ng_{i,w_{\tau+1}}\left(\sum_{\tau< \lambda<\sigma}\zeta_\lambda t^{r_\lambda}\right).$$
Since $vf(w_{\tau+1})>vf(w_\tau)$ by construction, and since 
$$v\left(\sum_{i=1}^ng_{i,w_{\tau+1}}\left(\sum_{\tau< \lambda<\sigma}\zeta_\lambda t^{r_\lambda}\right)\right)\ge \gamma_{w_{\tau+1}}(r_{\tau+1})=vf(w_{\tau+1}),$$
we conclude that $vf(w_\sigma)>vf(w_\tau)$.
\end{proof}

\begin{remark}
\label{rem: around zero}
Even if $w$ is not the approximation of a root of $f$, we can still use the transfinite recursion procedure to find a root to the polynomial $f(w+X)$. Hence, we obtain a sequence $(\zeta_\lambda t^{r_\lambda})_{\lambda<\sigma}$ with $r_\lambda$ strictly increasing such that $w+\sum_{\lambda<\sigma}\zeta_\lambda t^{r_\lambda}$ is a root of $f$. In particular, we find the possible initial terms of roots of $f$ as the possible approximation terms for $0$ with respect to $f$, i.e. by finding $\zeta t^r$ such that the initial term of $\sum_{k=1}^nD^{(k)}f(0)(\zeta t^r)^k$ cancels the initial term of $f(0)$.
\end{remark}

While successively adding approximation terms gives a root of $f$, we will see that it is not a sufficient procedure to recover all the roots. Using the same notation as above, and with $x$ being a root of $f$, suppose now that $r\in\Gamma$ is such that $x_{\le r}$ is a better approximation of $x$ than $w$, as defined in the end of Section \ref{sec: notation}. In particular, if $\zeta\in \F$ is such that $x_{\le r}=w+\zeta t^r$, then $\zeta\neq 0$. In this situation, we can write $x=w+\zeta t^r+y$, with $v(y)>r$. We have the equality
    $$f(w+\zeta t^r+y) = f(w)+\sum_{i=1}^ng_{i,w}(\zeta t^r+y)^i.$$
Since $v(y)>r$, we have that $b_{i,w}\zeta^it^{\rho_{i,w}+ir}$ is the initial term of both $g_{i,w}(\zeta t^r)^i$ and $g_{i,w}(\zeta t^r+y)^i$. In particular, we have $vg_{i,w}(\zeta t^r)=vg_{i,w}(\zeta t^r+y)=\gamma_{i,w}(r)$.

\begin{align}
\label{val equality in recursion}
vf(w)=v\left(\sum_{i=1}^ng_{i,w}(\zeta t^r+y)\right).
\end{align}
In particular, we have
\begin{align}
\label{val inequality in recursion}
vf(w) \ge \gamma_w(r).
\end{align}
If the inequality in (\ref{val inequality in recursion}) is strict, then equality (\ref{val equality in recursion}) gives
$$v\left(\sum_{j\in J_w(r)}g_{j,w}(\zeta t^r+y)\right)>\gamma_w(r).$$
In particular, this holds if $f(w)=0$. For any $j\in J_w(r)$, the term of valuation $\gamma_w(r)$ in $g_{j,w}(\zeta t^r+y)$ is simply the initial term, i.e. $b_{j,w}\zeta^jt^{\gamma_{j,w}(r)}$. Furthermore, by deinifion of $J_w(r)$, we have that this term in fact is equal to $b_{j,w}\zeta^jt^{\gamma_w(r)}$. Hence, we conclude that $\zeta$ satisfies
\begin{align}
    \label{first condition for zeta}
    \sum_{j\in J_w(r)}b_{j,w}\zeta^j=0.
\end{align}
Consequently, $\lvert J_w(r)\rvert>1$, since we otherwise would have $\zeta=0$. Assume now that (\ref{val inequality in recursion}) is an equality. Then, we are exactly in the situation preciding Definition \ref{def: approximation term}, and $\zeta t^r$ is an approximation term for $w$ with respect to $f$, i.e. $\zeta$ satisfies \ref{eqn: approximation term}.

We summarise the discussion in the following lemma.

\begin{lemma}
\label{lem: necessary approximation condition}
Let $f(X)\in \F((t^\Gamma))$. If $w+\zeta t^r$ is a better approximation than $w$ of some root $x$ of $f$, then
\begin{enumerate}[(a)]
    \item $\zeta$ satisfies (\ref{first condition for zeta}) and $\lvert J_w(r)\rvert>1$ or
    \item $\zeta t^r$ is an approximation term for $w$ with respect to $f$.
\end{enumerate}
\end{lemma}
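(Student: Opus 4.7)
The plan is to simply formalize the case analysis that precedes the lemma statement, since the discussion from equation (\ref{val equality in recursion}) onward already carries out essentially all the work; what remains is to package it cleanly.

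First I would write $x = w + \zeta t^r + y$ where $v(y) > r$, which is possible because $w + \zeta t^r$ is a better approximation of $x$ than $w$ and in particular $\zeta \neq 0$. Applying the Taylor expansion of $f$ around $w$, together with the definition $g_{i,w}(X) = D^{(i)}f(w)X^i$, gives
\begin{equation*}
0 = f(x) = f(w) + \sum_{i=1}^n g_{i,w}(\zeta t^r + y),
\end{equation*}
so $vf(w) = v\bigl(\sum_{i=1}^n g_{i,w}(\zeta t^r + y)\bigr)$. Since $v(y) > r$, the initial term of $g_{i,w}(\zeta t^r + y)$ coincides with that of $g_{i,w}(\zeta t^r)$, namely $b_{i,w}\zeta^i t^{\gamma_{i,w}(r)}$, and in particular $vg_{i,w}(\zeta t^r + y) = \gamma_{i,w}(r)$. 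Consequently
\begin{equation*}
vf(w) \ge \min_{i} \gamma_{i,w}(r) = \gamma_w(r),
\end{equation*}
which is the inequality (\ref{val inequality in recursion}).

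Next I would split into two cases according to whether this inequality is strict or an equality. If $vf(w) = \gamma_w(r)$, then by definition $r$ is an admissible exponent in the sense preceding Definition \ref{def: approximation term}, and the cancellation required for $vf(w + \zeta t^r) > vf(w)$ forces $\zeta$ to satisfy equation (\ref{eqn: approximation term}); that is exactly the statement that $\zeta t^r$ is an approximation term for $w$ with respect to $f$, so conclusion (b) holds.

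If instead $vf(w) > \gamma_w(r)$, then the sum $\sum_{i=1}^n g_{i,w}(\zeta t^r + y)$ must have valuation strictly larger than $\gamma_w(r)$, so the terms of valuation exactly $\gamma_w(r)$ on the right-hand side must cancel. These come precisely from the indices $i \in J_w(r)$ (all other indices contribute valuation strictly greater than $\gamma_w(r)$), and each contributes initial term $b_{i,w}\zeta^i t^{\gamma_w(r)}$. The cancellation therefore yields $\sum_{j \in J_w(r)} b_{j,w}\zeta^j = 0$, which is (\ref{first condition for zeta}). Since $\zeta \neq 0$, this equation cannot be satisfied if $|J_w(r)| = 1$, hence $|J_w(r)| > 1$, giving conclusion (a). The only mildly delicate point is verifying that the terms of valuation $\gamma_w(r)$ on the left really do come only from the initial terms of the $g_{j,w}(\zeta t^r + y)$ for $j \in J_w(r)$; but this is immediate from $v(y) > r$, so I do not anticipate a genuine obstacle.
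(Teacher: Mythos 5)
Your proof is correct and follows essentially the same route as the paper, which proves the lemma by exactly this discussion (writing $x = w + \zeta t^r + y$ with $v(y) > r$, deriving (\ref{val equality in recursion}) and (\ref{val inequality in recursion}), and splitting into the strict-inequality case giving (\ref{first condition for zeta}) with $\lvert J_w(r)\rvert > 1$ and the equality case giving an approximation term). Your packaging of the case analysis, including the observation that only indices in $J_w(r)$ contribute at valuation $\gamma_w(r)$ and that $\zeta \neq 0$ rules out $\lvert J_w(r)\rvert = 1$, matches the paper's argument.
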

    
In some non-trivial cases, we can use approximation terms to directly determine if a polynomial has a root in a particular field or not, as illustrated by the following example.
\begin{example}
\label{ex: recursion says all}
Let
$$f(X)=X^3-X^2-\frac{1}{t}\in\mathbb{F}_3(t)[X].$$
We will show by induction that there is a root
$$x\in\xoverline{\mathbb{F}}_3((t^{\mathbb{Q}}))$$
such that each term of $x$ is of the form $a t^\gamma$ with $a\in\F_3$ and $\gamma \in \frac{1}{3^\infty}\Z$, i.e. such that $x\in \F_3((t^{1/3^{\infty}}))$. For any such root, we  have by Remark \ref{rem: around zero} that the initial term is equal to $t^{-1/3}$. Assume that we have an approximation $w=x_{<\lambda}\in \mathbb{F}_3((t^{1/p^{\infty}}))$ for some ordinal $\lambda$. If $f(w)=0$ we are done, so assume $f(w)\neq 0$. Let $\zeta t^r$ be an approximation term for $w$ with respect to $f$. Then, we have that $vf(w)= \min\left\{r-1/3, \ 2r, \ 3r\right\}$. By definition of approximation term, we also have that $r>v(w)$. So $2r>r-1/3$ and $vf(w)\neq 2r$. Suppose that $r-1/3=3r$. Then $r=-1/6$, so $vf(w)=-1/2$. This is a contradiction, since $w\in \F_3((t^{1/3^{\infty}}))$. Hence, we have that $vf(w)$ is equal to either $r-1/3$ or to $3r$, but not both.

Let $\xi t^{vf(w)}$, with $\xi \in \F_3$.  If $vf(w)=r-1/3$, we let $\zeta=\xi/2$. If $vf(w)=3r$, we set $\zeta^3=-\xi$, i.e. $\zeta=-\xi$. In both cases, $\zeta t^r\in\F_3(t)^{1/3^\infty}$. This shows that each approximation in the recursion procedure preceding Lemma \ref{lem: approximation term converges} lies in $\F_3((t^{1/3^{\infty}}))$, so we conclude that the root $w_\sigma$ in the lemma also lies in $\F_3((t^{1/3^{\infty}}))$.

In this example, we can also use Lemma \ref{lem: necessary approximation condition} to determine the minimal Hahn field containing all roots of $f$. Consider the following functions, as in the discussion preceding Defintion \ref{def: approximation term} recalling that $v(w)=-1/3$ for any non-zero approximation $w$ of a root $x$ of $f$.
\begin{align*}
    \gamma_{1,w}(r) & =-1/3r; \\
    \gamma_{2,w}(r) & =2r; \\
    \gamma_{3,w}(r) & =3r.
\end{align*}
Since these function do not depend on $w$, we will just write $\gamma_1$, $\gamma_2$ and $\gamma_3$. Let $\zeta t^r$ be such that $w=\zeta t^r$ is a better approximation than $w$ of some root $x$ of $f$. Assuming that $\zeta t^r$ is not an approximation term for $w$ with respect to $f$, we get by Lemma \ref{lem: necessary approximation condition} that $\lvert J_w(r)\rvert>1$. Pairs of the lines defined by the $\gamma_i$ intersect at $r=-1/6$, $r=-1/3$ and $r=0$. For $r=-1/3$ and $r=0$, we have that $\lvert J_w(r)\rvert=1$. For $r=-1/6$ however, we have that $J_w(r)=\{1,3\}$. Since $\zeta$ satisfies
$$-2\zeta+\zeta^3=0,$$
we have that $\zeta\in\{\sqrt{2},\sqrt{2}\}$. Therefore, $w+\zeta t^r\in K:=\F_3(\sqrt{2})((t^{\frac{1}{2\cdot3^\infty}}))$. Repeating the argument above for the existence of a root in $\F_3((t^{1/3^\infty}))$ with $w'=w+\zeta t^r$ in place of $w$ shows that $f$ has a root $w'_\sigma\in K$.

Since the choice of approximation term for valuation $r$ for $w$ is unique when $\lvert J_w(r)\rvert =1$, and since we showed that there was no approximation term with valuation $r=-1/6$, we get that $w_\sigma$ and $w'_\sigma$ are the unique roots $x$ and $y$ such that $x_{\le -1/6}=w$ and $y_{\le -1/6}=w+\zeta t^r$. Since any root of $f$ in $\Fbar_3((t^\Q))$ must have $w$ as an initial sum, we get that the third root $\alpha$ of $f$ in $\Fbar_3((t^\Q))$ must satisfy $\alpha_{\le r}=w-\zeta t^r$. Hence, $\alpha$ is obtained by repeating the recursion procedure for $w-\zeta t^r$, which shows that $\alpha\in K$.  Conversely, any Hahn field containing $w'_\sigma$ and $\alpha$ will contain $K$ as a subfield. 
\end{example}

    %
    
    
\begin{remark}
The reason why the recursion procedure cannot be used immediately for arbitrary polynomials is that it can be difficult to compute $D^{(n)}f(x_{<\lambda})$. More precisely, since the truncations of $x_{<\lambda}$ give a pseudo convergent sequence of algebraic type, we might have that $vD^{(n)}f(x_{<\tau})< vD^{(n)}f(x_{<\lambda})$ for any $\tau<\lambda$, so $D^{(n)}f(x_{<\lambda})$ is not computable by finite approximation. Note that Krasner's Lemma does not work in this situation, since the value of $D^{(n)}f(x_{<\tau})$ might be bounded. However, if the Hasse derivatives are constant, the situation becomes easier, as we will see in the special case of additive polynomials.
\end{remark}

\section{Order type of the support of algebraic elements}
\label{sec: order type}
When $K$ has characteristic $p$, then $P\in K[X]$ is additive if and only if it is of the form
$$\sum_{i=0}^n a_iX^{p^i}.$$
In this case, the $D^{(p^k)}P(X)=a_kX^{p^k}$ and $D^{(\ell)}P(X)=0$ for $\ell$ not a power of $p$. Since these Hasse derivatives are constant, we get in particular that the functions $\gamma_{i,w}$ and the set $J_w(r)$ in Section \ref{sec: approximation} do not depend on $w$. When $P$ is given, we will thus only write $\gamma_i$ and $J(r)$. Furthermore, if $w$ is the approximation of a root of $P$, using the Taylor approximation of $P$ to find an approximation term $\zeta t^r$ for $w$ with respect $P$ just amounts to writing
$$P(w+ \zeta t^r) = P(w)+P(\zeta t^r).$$
As we will see, this implies that the material in Section \ref{sec: approximation} has an elementary geometric interpretation which simplifies computation. This is made possible with the following definition.
    
\begin{definition}
\label{def: pt of intersection}
Let $\F$ be a field of characteristic $P$ and let $\Gamma$ be an ordered abelian group. Let $P(X)\in\F((t^\Gamma))[X]$ be an additive polynomial. We say that $r\in\mathbb{R}\cup\{\infty\}$ is a \textbf{point of intersection of $P$} if $\lvert J(r)\rvert >1$.
\end{definition}
    
\begin{remark}
\label{rem: number of intersection points}
The number of points of intersection of $P$ is bounded by the maximal number of intersection points of $n+1$ lines, including the point at infinity, i.e. by $n(n+1)/2+1$. Furthermore, there is an algorithm which takes as input an additive polynomial $P(X)\in\F_p[t][X]$ and outputs the points of intersection of $P$. This can be seen simply by noting that the $\gamma_i$ in this case are lines with integer slopes.
\end{remark}
    
Let $P$ be an additive polynomial, let $x$ be a root of $P$ with value $r_0$, and let $r$ be the minimal point of intersection $P$ larger than $r_0$. With Definition \ref{def: pt of intersection} in mind, the recursion procedure of approximating a root of $P$ via approximation terms can be illustrated as follows.
    
\begin{tikzpicture}
\draw[gray, thick] (0,0) -- (10,6) -- (0,3);
\draw[red, dashed, thick] (0,0) -- (0,3) -- (5,3);
\filldraw[black] (10,6) circle (2pt) node[anchor=west] {\ $\gamma_k(r)=\gamma_\ell(r)$};
\filldraw[black] (0,0) circle (2pt) node[anchor=east] {$\gamma_k(r_0)$ \ };
\filldraw[black] (5,3) circle (2pt) node[anchor=west] {\ $\gamma_k(r_1)$};
\filldraw[black] (0,3) circle (2pt) node[anchor=east] {$\gamma_\ell(r_0)$ \ };
    
\draw[red, dashed, thick] (5,3) -- (5,4.5) -- (7.5,4.5);
\filldraw[black] (5,4.5) circle (2pt);
\filldraw[black] (7.5,4.5) circle (2pt);
    
\draw[red, dashed, thick] (7.5,4.5) -- (7.5,5.25) -- (8.75,5.25);
\filldraw[black] (7.5,5.25) circle (2pt);
\filldraw[black] (8.75,5.25) circle (2pt);
    
\draw[red, dashed, thick] (8.75,5.25) -- (8.75,5.625) -- (9.375,5.625);
\filldraw[black] (8.75,5.625) circle (2pt);
\filldraw[black] (9.375,5.625) circle (2pt);
    
\draw[red, dashed, thick] (9.375,5.625) -- (9.375,5.8125) -- (9.6875,5.8125);
\filldraw[black] (9.375,5.8125) circle (2pt);
\filldraw[black] (9.6875,5.8125) circle (2pt);
    
\end{tikzpicture}
    
The illustration suggests the procedure converges to $r$, so that the minimal element in the support of $x$ larger than or equal to $r$ would be $r_\omega$. This is an instance of a more general fact which is captured in the following lemma.
\begin{lemma}
\label{lem: order increasing at intersection}
Let 
$$P(X)=\sum_{i=0}^m a_iX^{p^i}\in\F_p((t))[X]$$
be an additive polynomial and let
$$x=\sum_{\tau<\sigma}b_\tau t^{r_\tau}\in\FpbarQ$$
be a root of $P$ with support $\{a_\tau \ | \ \tau<\sigma\}$. Let $\lambda<\sigma$ be such that $\sigma\ge\lambda\cdot\omega$. Then, $\lim_{i\to\infty}r_{\lambda\cdot i}$ is a point of intersection of $P$.
\end{lemma}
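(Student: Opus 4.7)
The plan is to argue by contradiction. Suppose $r^* := \lim_i r_{\lambda \cdot i}$ is not a point of intersection of $P$. By Remark~\ref{rem: number of intersection points} there are only finitely many such points, so one may choose $\epsilon>0$ with $(r^*-\epsilon,r^*+\epsilon)$ disjoint from that set. On this interval $|J(r)|=1$ with a fixed minimizer $p^{j_0}$, and $\gamma(r)=\gamma_{p^{j_0}}(r)=v(a_{j_0})+p^{j_0}r$ is linear. Pick $i_0$ with $r_{\lambda\cdot i_0}>r^*-\epsilon$; then for every $\tau$ with $\lambda\cdot i_0\le\tau<\lambda\omega$, the valuation $r_\tau$ lies in $(r^*-\epsilon,r^*)$, so clause (a) of Lemma~\ref{lem: necessary approximation condition} is ruled out by $|J(r_\tau)|=1$, and each $b_\tau t^{r_\tau}$ must be an approximation term for $x_{<\tau}$. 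In particular $vP(x_{<\tau})=\gamma(r_\tau)$.

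Inside this interval the approximation term added at each stage is uniquely determined: its valuation $r$ is solved from $\gamma(r)=vP(w)$ and its leading coefficient is the unique $p^{j_0}$-th root of $-c/a_{j_0}^{(0)}$ in $\Fpbar$, where $c$ is the initial coefficient of $P(w)$. Thus the recursion of Lemma~\ref{lem: approximation term converges} started from $w_0:=x_{<\lambda\cdot i_0}$ reproduces $x_{<\lambda\cdot i_0+n}$ at its $n$-th stage, so its $\omega$-sequence of added valuations is $(r_{\lambda\cdot i_0+n})_{n<\omega}$. Making the geometric picture preceding the lemma precise, I would identify its $\omega$-limit $r^{**}:=\sup_n r_{\lambda\cdot i_0+n}$ with the first point of intersection strictly above $r_{\lambda\cdot i_0}$: once $r_n$ is close enough to $r^{**}$ the next contribution to $vP(w_n)$ is dominated by the line $\gamma_{p^{j_1}}$ meeting $\gamma_{p^{j_0}}$ at $r^{**}$, and the resulting recurrence $r_{n+1}=\gamma_{p^{j_0}}^{-1}(\gamma_{p^{j_1}}(r_n))$ is a contractive affine map with unique fixed point $r^{**}$. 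By our choice of $\epsilon$, $r^{**}\ge r^*+\epsilon$.

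To conclude, I split on the size of $\lambda$. If $\lambda<\omega$, then $\{\lambda\cdot i_0+n:n<\omega\}$ is cofinal in $\lambda\omega=\omega$, so $r^*=\sup_n r_{\lambda\cdot i_0+n}=r^{**}$, contradicting $r^*<r^*+\epsilon\le r^{**}$. If $\lambda\ge\omega$, the same supremum still equals $r^{**}$, but using $\omega\le\lambda$ and strict monotonicity of the subsequence $(r_{\lambda\cdot i})_{i<\omega}$ we also have $\sup_n r_{\lambda\cdot i_0+n}\le r_{\lambda\cdot i_0+\omega}\le r_{\lambda\cdot(i_0+1)}<r^*$, so $r^{**}<r^*$, once more contradicting $r^{**}\ge r^*+\epsilon$. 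Either way we derive a contradiction, so $r^*$ is a point of intersection.

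The main technical obstacle is the rigorous identification of the recursion's $\omega$-limit with $r^{**}$: perturbative contributions from the higher Hasse coefficients $a_{j_0}^{(m)}$ with $m\ge1$ introduce fixed positive shifts of the next valuation above $\gamma(r_n)$ in $P(w_n)$, and one must verify that these shifts are eventually dominated by the vanishing gap $\gamma_{p^{j_1}}(r_n)-\gamma(r_n)$. Since the shifts stay bounded below while the gap tends to zero as $r_n\to r^{**}$, the contractive regime eventually sets in and convergence to $r^{**}$ follows; the unbounded case $r^{**}=\infty$ is automatically a point of intersection by the convention in Definition~\ref{def: pt of intersection}, so no separate argument is needed there.
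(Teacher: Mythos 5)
Your setup (finitely many intersection points, hence an $\epsilon$-interval around $r^*$ on which $\lvert J\rvert=1$, so that by Lemma~\ref{lem: necessary approximation condition} every term $b_\tau t^{r_\tau}$ with $\lambda\cdot i_0\le\tau<\lambda\cdot\omega$ is the unique approximation term and the recursion of Lemma~\ref{lem: approximation term converges} reproduces these terms) is sound and matches the first half of the paper's argument. But the whole contradiction rests on the identification $\sup_n r_{\lambda\cdot i_0+n}=r^{**}$, where $r^{**}$ is the first point of intersection above $r_{\lambda\cdot i_0}$, and this step is not established; indeed you flag it yourself, and the heuristic you offer is circular. The contraction argument only applies ``once $r_n$ is close enough to $r^{**}$,'' but under your reductio hypothesis all the valuations $r_{\lambda\cdot i_0+n}$ lie in $(r^*-\epsilon,r^*)$, hence stay at distance at least $\epsilon$ below $r^{**}$, so the claimed contractive regime is never entered; you are assuming $r_n\to r^{**}$ in order to argue that $r_n\to r^{**}$. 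Moreover the claim itself is dubious as a general fact about the recursion: $vP(w_{n+1})$ is the valuation of $P(w_n)+P(\zeta_n t^{r_n})$ after the leading terms cancel, and further cancellations among the remaining terms (including the tails of the coefficients $a_i\in\F_p((t))$) can push $vP(w_{n+1})$ far above $\gamma_{j_1}(r_n)$, so the affine recurrence $r_{n+1}=\gamma_{j_0}^{-1}(\gamma_{j_1}(r_n))$ need not govern the dynamics and the sequence can in principle overshoot $r^{**}$ or stall below it --- ruling out the latter is precisely the content of the lemma. The gap is most visible in your $\lambda\ge\omega$ branch: there you correctly observe (unconditionally) that $\sup_n r_{\lambda\cdot i_0+n}\le r_{\lambda\cdot(i_0+1)}<r^*$, but then the ``contradiction'' is obtained only against the unproven identification, so nothing has been refuted; what is actually needed there is that the supremum of an intermediate $\omega$-block of approximation-term valuations is \emph{some} point of intersection, which is essentially the lemma itself and does not follow from ``convergence to the nearest intersection point.''

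The missing ingredient, and the place where the paper does its real work, is a cancellation argument rather than a dynamical one: since $P(x)=0$ (so $vP(x_{<s})$ keeps increasing along the support), the new leading term $a_\ell\zeta_i^{p^\ell}t^{\gamma_\ell(s_i)}$ produced at each approximation step must be cancelled by a term of $P(x_{<r})$ lying on a different line, i.e.\ $\gamma_\ell(s_i)=\gamma_k(s)$ for some $k\neq\ell$ and some support point $s<r^*$; choosing constants $\ell,k$ along a subsequence and letting $s_i\to r^*$ forces $\gamma_k(r^*)=\gamma_\ell(r^*)$ with $\ell\in J(r^*)$, so $\lvert J(r^*)\rvert>1$ directly, with no need to locate $r^{**}$ or to split on the size of $\lambda$. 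To repair your proof you would have to replace the contraction claim by an argument of this kind; as written, the proposal has a genuine gap at its central step.
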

    
\begin{proof}
Let $r=\sup_{i\in \N}\{r_{\lambda\cdot i}\}\in\R\cup\{\infty\}$ and let $w=x_{<r}$. While $r$ might not be in $\Q$, it makes sense to talk about the set $J(r)$, by extending the functions $\gamma_i$ to $\R\cup\{\infty\}$ in the natural way. Let $(s_i)_{i\in \N}$ be a subsequence of $(r_{\lambda\cdot i})_{i\in \N}$ such that $x_{\le s_i}-x_{< s_i}$ is an approximation term for $x_{<s_i}$ with respect to $P$, for every $i$. Such a subsequence exists, since $\lvert J(r_\tau)\rvert>1$ except when $r_\tau$ is a point of intersection of $P$. In particular, we have that $\lim_{i\to\infty}s_i=r$. Let $b'_i$ be such that $x_{\le s_i}-x_{< s_i}=b'_i t^{s_i}$. By definition, we then have that $vP(b'_it^{s_i})=vP(x_{< s_i})$. Let $\ell_i$ and $k_i$ be such $vP(b'_it^{s_i})=\gamma_{\ell_i}(s_i)$ and $\gamma_{k_i}=vP(x_{<s_i})$. In particular, $\ell_i\neq k_i$. We can assume that neither $\ell_i$ nor $k_i$ depend on $i$, possibly by replacing $(s_i)$ with a subsequence. Write $\ell=\ell_i$ and $k=k_i$. Since $s_i$ converges to $r$, we can let $M\in \N$ be such that $\gamma_j(s)>\gamma_\ell(r)$ for all $j\notin J(r)$ and all $s>s_M$. Similarly, we can let $N\in \N$ be such that $\gamma_\ell(s_n)>\gamma_j(s_M)$ for all $n>N$ and all $j\notin J(r)$. Now, let $n>N$ and let $s<r$ be such that $\gamma_k(s)=\gamma_\ell(s_n)$. If $k\notin J(r)$, then $\gamma_\ell(s_n)>\gamma_k(s_M)$. So $s>s_M$, but then $\gamma_k(s)>r$, which contradicts $\gamma_k(s)=\gamma_\ell(s_n)$. Hence, $k\in J(r)$ and $r$ is a point of intersection of $P$.
\end{proof}
    
\begin{remark}
\label{rem: more general than additive}
Note that Lemma \ref{lem: order increasing at intersection} only uses that the Hasse derivatives of $P$ are constant. One could thus formulate a similar statement for more general polynomials where this holds.
\end{remark}
    
    We can now determine a bound for the order type of elements in $\FpbarQ$ that are algebraic over $\F_p((t))$.
    
    \begin{theorem}
    \label{thm: algebraic order type}
    Let $x\in\FpbarQ$ be algebraic over $\F_p((t))$. Then $x$ has order type at most $\omega^m$, where $m=n(n+1)/2+1$ and $n$ is the degree of $x$ over $\F_p((t))$.
    \end{theorem}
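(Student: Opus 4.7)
The plan is to reduce via Ore's lemma to the additive case and then contradict Remark~\ref{rem: number of intersection points} by extracting $m$ distinct finite intersection points from the support of $x$ using Lemma~\ref{lem: order increasing at intersection}.

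Let $f(X)\in\F_p((t))[X]$ be the minimal polynomial of $x$, of degree $n$. By Ore's lemma, there is an additive polynomial $P(X)=\sum_{i=0}^n a_iX^{p^i}\in\F_p((t))[X]$ of degree $p^n$ such that $f\mid P$, and hence $P(x)=0$. The associated lines $\gamma_0,\ldots,\gamma_n$ have pairwise distinct slopes $1,p,\ldots,p^n$, so by Remark~\ref{rem: number of intersection points} the polynomial $P$ has at most $\binom{n+1}{2}=n(n+1)/2=m-1$ finite points of intersection in $\mathbb{R}$, the remaining unit in $m$ being accounted for by the trivial intersection at infinity.

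Now suppose for contradiction that the support of $x$ has order type $\sigma>\omega^m$, so $\sigma\geq\omega^m+1$; in particular $r_{\omega^m}$ is a well-defined rational exponent in the support. For each $k=1,\ldots,m$, set $\lambda_k:=\omega^{k-1}$. Then $\lambda_k<\omega^m<\sigma$ and $\lambda_k\cdot\omega=\omega^k\leq\omega^m\leq\sigma$, so Lemma~\ref{lem: order increasing at intersection} yields that
\[ r^{(k)}:=\lim_{i\to\infty}r_{\omega^{k-1}\cdot i} \]
is a point of intersection of $P$. Since $\omega^{k-1}\cdot i<\omega^k\leq\omega^m$ for every $i<\omega$, we have $r_{\omega^{k-1}\cdot i}<r_{\omega^m}$, and therefore $r^{(k)}\leq r_{\omega^m}<\infty$, so each $r^{(k)}$ is finite.

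It remains to verify that these $r^{(k)}$ are pairwise distinct. For $k<k'$ we have $r^{(k)}\leq r_{\omega^k}$ as above, while $\omega^{k'-1}\cdot 2>\omega^{k'-1}\geq\omega^k$ combined with strict monotonicity of the indexing gives $r^{(k')}\geq r_{\omega^{k'-1}\cdot 2}>r_{\omega^k}\geq r^{(k)}$. Thus $r^{(1)}<r^{(2)}<\cdots<r^{(m)}$ are $m$ distinct finite intersection points of $P$, contradicting the bound $m-1$, so $\sigma\leq\omega^m$. The main delicate point is this ordinal bookkeeping, which must be carried out with care so that all relevant indices stay strictly below $\omega^m$ and the extracted limits genuinely separate rather than collapsing onto the same intersection point or onto infinity.
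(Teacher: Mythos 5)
Your proof is correct and follows essentially the same route as the paper: reduce to an additive polynomial via Ore's lemma, apply Lemma \ref{lem: order increasing at intersection} at the limit indices $\omega^{k-1}\cdot\omega$, and conclude with the counting bound of Remark \ref{rem: number of intersection points}. The only difference is presentational: where the paper's proof says ``by induction,'' you make the step explicit by extracting $m$ distinct finite intersection points and deriving a contradiction with the bound of $n(n+1)/2$ finite intersections, which is a valid (and more detailed) way of carrying out the same argument.
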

    
    \begin{proof}
    Let $f$ be the minimal polynomial of $x$ over $\F_p((t))$ and let $P(X)=\sum_{k=0}^n a_kX^{p^k}$ be an additive polynomial divisible by $f$, as in Lemma \ref{lem: ore}. By induction and Lemma \ref{lem: order increasing at intersection}, the order type of $x$ is at most $\omega^m$, where $m$ is the number of points of intersection of $P$. As noted in Remark \ref{rem: number of intersection points}, $m$ is bounded above by $n(n+1)/2+1$, which proves the theorem.
    \end{proof}
    
    \begin{remark}
    The idea with Theorem \ref{thm: algebraic order type} is give a general bound to algebraic elements. Given $f$, we can obtain a sharper bound by considering the actual points of intersection of the corresponding additive polynomial.
    \end{remark}
    
    \section{Bounds on the ramification away from $p$}
    \label{sec: rayner}
    \begin{definition}
    \label{def: ramification in element}
    Let $x\in\F((t^\Gamma))$. Let $r\in\Gamma$ be in the support of $x$. Let $S$ be the support of $x_{<r}$. We say that a prime $q$ \textbf{ramifies} at $r$ in $x$  if there exists a positive integer $K$ such that $r=\frac{a}{bq^K }$ and, for any $k\ge K$ there is no element $s=\frac{a'}{b'q^k}\in S$, where $a$, $a'$, $b$ and $b'$ are all coprime to $q$. When $x$ is clear from context, we will just say that $q$ ramifies at $r$. 
    \end{definition}
    
    \begin{definition}
    Let $x\in\F((t^\Gamma))$. Let $r\in\Gamma$ and suppose that $x_{\le r}-x_{<r}=\zeta t^r$ is non-zero. Let $K$ be the minimal subfield of $\F$ such that $x_{<r}\in K((t^\Gamma))$. We say that $\zeta$ \textbf{expands} $x$ at $r$ if $\zeta\notin K$.
    \end{definition}
    
    As mentioned, there are elements in the relative algebraic closure of $\F_p(t)$ in $\PPFp$ that require infinite ramification for $p$. It was established by Rayner that this is not the case for primes different than $p$ \cite{MR234941}. We will give a new proof of this result using additive polynomials. This method will also give an effective bound for the ramification away from $p$, which is used in \cite{lisinski}. Similarly, we will show that expanding away from $p$:th roots is bounded, in a sense made more precise below. 
    
    Let $P(X)=\sum_{i=0}^n a_iX^{p^i}\in\F_p(t)[X]$ be an additive polynomial and write
    $$I_P:=\{i\in\{0,\ldots, n\} \ | \ a_i\neq 0\}.$$
    As above, we denote by $\gamma_i$ the function on $\Q\cup\{\infty\}$ sending $r$ to $p^ir+v(a_i)$ for $i\in I_P$.
    
    \begin{theorem}
    \label{thm: ramifies at intersection}
    Let $\F$ be a field of characteristic $p$ and let $P(X)\in\F_p(t)[X]$
    be an additive polynomial. Let $q$ be a prime different from $p$. If $x\in\F((t^\Q))$ is a root of $P$ and $r\in\Q$ is such that $q$ ramifies at $r$ in $x$, then $r$ is a point of intersection of $P$.
    \end{theorem}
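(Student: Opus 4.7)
The plan is to proceed by contraposition: I assume that $r$ lies in the support of $x$ but is not a point of intersection of $P$, so that $|J(r)|=1$, and derive that $q$ does not ramify at $r$ in $x$. Setting $w=x_{<r}$ and letting $\zeta\in\F$ be the nonzero coefficient of $t^r$ in $x$, the element $w+\zeta t^r$ is a strictly better approximation of the root $x$ than $w$ is. Lemma \ref{lem: necessary approximation condition} then supplies two options. Case (a) requires $|J(r)|>1$, which is precisely the definition of $r$ being a point of intersection, so our assumption rules it out. The only remaining option is that $\zeta t^r$ is an approximation term for $w$ with respect to $P$, which, with $j_0$ denoting the unique element of $J(r)$, yields the key identity
$$vP(w)=\gamma_{j_0}(r)=v(a_{j_0})+p^{j_0}r.$$

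Next, I would analyze the rational exponents appearing in the Hahn series $P(w)$. After multiplying $P$ through by a common denominator in $\F_p[t]$ of its coefficients --- a step which shifts every $\gamma_i$ by the same integer and hence preserves both the roots of $P$ and the set of points of intersection --- I may assume $a_j\in\F_p[t]$ for all $j$. Writing $w=\sum_\tau c_\tau t^{s_\tau}$, additivity of $P$ combined with the Frobenius identity $w^{p^j}=\sum_\tau c_\tau^{p^j}t^{p^j s_\tau}$ yields an expansion of $P(w)$ whose exponents all have the form $k+p^j s_\tau$ with $k\in\Z$. Since $q\neq p$, neither multiplication by $p^j$ nor addition of an integer $k$ changes the $q$-part of the denominator of $s_\tau$; consequently every exponent in the support of $P(w)$, and in particular $vP(w)$, has $q$-denominator bounded above by $\max_\tau\bigl(q\text{-denominator of }s_\tau\bigr)$.

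The contradiction then follows by combining the two computations. If $q$ ramifies at $r$ with $q^K$ the exact power of $q$ in the denominator of $r$, then by definition of ramification every $s_\tau$ in the support of $w$ has $q$-denominator strictly less than $q^K$, forcing the $q$-denominator of $vP(w)$ to be strictly less than $q^K$ as well. But the identity $vP(w)=v(a_{j_0})+p^{j_0}r$ forces the $q$-denominator of $vP(w)$ to equal $q^K$ exactly, since $v(a_{j_0})\in\Z$ and $p^{j_0}$ is coprime to $q$.

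The main obstacle I anticipate is ensuring that case (a) of Lemma \ref{lem: necessary approximation condition} really matches exactly with ``$r$ is a point of intersection'', and handling cleanly the degenerate subcase $P(w)=0$ where $vP(w)=\infty$ rules out case (b) a priori; here a short valuation argument comparing $vP(\zeta t^r)=\gamma_{j_0}(r)$ with $vP(y)>\gamma_{j_0}(r)$ in the decomposition $x=w+\zeta t^r+y$ shows that $P(w)=0$ is itself incompatible with $|J(r)|=1$, so the dichotomy collapses to the case treated above. The remainder is transparent bookkeeping with $q$-adic denominators, enabled by additivity of $P$ together with the Frobenius endomorphism.
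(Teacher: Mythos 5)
Your proof is correct and follows essentially the same route as the paper: both reduce via Lemma \ref{lem: necessary approximation condition} to the case where $\zeta t^r$ is an approximation term, and then use additivity of $P$ to compare the $q$-part of the denominator of $vP(x_{<r})=\gamma_{j_0}(r)=v(a_{j_0})+p^{j_0}r$ with that of the exponents $k+p^j s_\tau$ coming from the support of $x_{<r}$, contradicting the definition of ramification. Your contrapositive framing, the clearing of denominators of the $a_j$, and the explicit treatment of the degenerate case $P(x_{<r})=0$ are only cosmetic variations on the paper's argument.
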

    
    \begin{proof}
    Write $P=\sum_{i=0}^n a_iX^{p^i}$. Let $r\in\Gamma$ be in the support of $x$ and suppose that $q\neq p$ is a prime which ramifies in $x$ at $r$. 
    Let $\zeta t^r=x_{\le r}-x_{<r}$. Let $c_1t^{\gamma_1}$ be the initial term of $P(x_{<r})$ and let $c_2t^{\gamma_2}$ be the initial term of $P(\zeta t^r)$. If $\zeta t^r$ is an approximation term for $x_{<r}$, we have that $\gamma_\ell(r)=\gamma_2$ for some $\ell\in J(r)$. This implies that
    $$r=\frac{\gamma_2-v(a_\ell)}{p^\ell}.$$
    By the assumption on $q$, this is only possible if
    $$\gamma_2-v(a_\ell)=\frac{a}{bq^k}$$
    for some $a,b\in\Z$ coprime to $q$ and some positive integer $k$. On the other hand, since $P$ is additive, there is some $j\in I_P$ and some $r_\tau$ in the support of $x_{<r}$ such that 
    $$\gamma_2=\gamma_j(r_\tau).$$
    This gives the equality
    $$ r_\tau=\frac{a+bq^k(v(a_i)-v(a_j))}{bq^{k+j}}.$$
    Since $a$ is coprime to $q$, we get that $a+bq^k(v(a_i)-v(a_j))$ is coprime to $q$ as well. This contradicts the assumption on $q$. Hence, $\zeta t^r$ is not an approximation term for $x_{<r}$ with respect to $P$ and $\lvert J(r)\rvert>1$ by Lemma \ref{lem: necessary approximation condition}.
    \end{proof}

Given $m\in\N$, we define $\Gamma_m:=\frac{1}{mp^\infty}\Z$.
\begin{corollary}
\label{cor: finite ramification away from p}
Let $F$ be a field of characteristic $p$. There is an algorithm $\maxram$ which takes as input a polynomial $f(X)\in\F_p(t)[X]$ and outputs a natural number $m$ not divisible by $p$ such that any root of $f$ in $\F((t^\Q))$ is already in $\F((t^{\Gamma_m}))$.
\end{corollary}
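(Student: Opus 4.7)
The plan is to produce $m$ as the least common multiple of the prime-to-$p$ parts of the denominators of the finitely many points of intersection of a suitably chosen additive polynomial, and then to invoke Theorem \ref{thm: ramifies at intersection} to rule out any other source of ramification away from $p$.

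Concretely, on input $f\in\F_p(t)[X]$, the algorithm $\maxram$ first runs $\addpol$ from Remark \ref{rem: ore alg} to produce an additive polynomial $P(X)\in\F_p(t)[X]$ with $f\mid P$; every root of $f$ in $\F((t^\Q))$ is then a root of $P$. It next computes the finite set $R$ of points of intersection of $P$ using the procedure from Remark \ref{rem: number of intersection points}. Each $r\in R$ is a rational number whose denominator factors as $p^{a_r}m_r$ with $\gcd(m_r,p)=1$, and the algorithm outputs $m:=\mathrm{lcm}\{m_r : r\in R\}$.

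For correctness, let $x\in\F((t^\Q))$ be a root of $f$ with support $S$. Suppose for contradiction that $S\not\subseteq\Gamma_m$, so the set $T$ of $s\in S$ whose prime-to-$p$ denominator does not divide $m$ is non-empty. Since $S$ is well-ordered, $T$ has a least element $s_0$, and some prime $q\neq p$ satisfies $v_q(\mathrm{denom}(s_0))>v_q(m)$. Set $K:=v_q(\mathrm{denom}(s_0))$, so we may write $s_0=a/(bq^K)$ with $a,b$ coprime to $q$. For any $s'\in S$ with $s'<s_0$, the minimality of $s_0$ in $T$ forces $v_q(\mathrm{denom}(s'))\le v_q(m)<K$, hence no such $s'$ has the form $a'/(b'q^k)$ with $a',b'$ coprime to $q$ and $k\ge K$. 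Thus $q$ ramifies at $s_0$ in $x$, and Theorem \ref{thm: ramifies at intersection} implies $s_0\in R$. But then $v_q(\mathrm{denom}(s_0))\le v_q(m_{s_0})\le v_q(m)$, contradicting $K>v_q(m)$.

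The main conceptual step is the minimality argument selecting $s_0$ in the well-ordered support so that Theorem \ref{thm: ramifies at intersection} can be applied at $s_0$; all other primes exceeding their bound occur strictly later in $S$ by choice, which is exactly what the definition of ramification requires. Once this is in place, the corollary reduces to assembling the already-provided algorithmic ingredients ($\addpol$, the computation of intersection points, and an LCM) and outputting $m$.
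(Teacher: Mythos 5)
Your proof is correct and follows essentially the same route as the paper: run $\addpol$, compute the finite set of points of intersection of the resulting additive polynomial, take $m$ from the prime-to-$p$ parts of their denominators, and invoke Theorem \ref{thm: ramifies at intersection}. The only difference is that you spell out, via the least element of the well-ordered support violating the bound, why Theorem \ref{thm: ramifies at intersection} applies—a step the paper's proof leaves implicit—which is a welcome clarification rather than a deviation.
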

    
\begin{proof}
Let $\deg(f)=n$. Let $P(X)=\addpol(f)$, as in Remark \ref{rem: ore alg}.
Suppose that $x\in \F((t^\Q))$ is a root of $f(X)$, so in particular a root of $P(X)$. Let $S$ be the points of intersection of $P$. Let $q_1^{e_1},\ldots,q_\ell^{e_\ell}$ be the prime powers coprime to $p$ occurring as factors in the denominators of the reduced fractions of elements in $S$. By Theorem \ref{thm: ramifies at intersection} and by definition of a prime ramifying in $x$, any $r=a/b$ on reduced form in the support of $x$ is such that the prime power factors of $b$ coprime to $p$ divides $m=\prod_{i=1}^\ell q_i^{e_i}$. In other words, the support of $x$ is contained in $\Gamma_m$. The result thus follows from having $\maxram$ returning $m$ on input $f$.
\end{proof}

\begin{remark}
    Rayner shows the existence of the bound in Corollary \ref{cor: finite ramification away from p} by showing that  $\bigcup_{m=1}^\infty\xoverline{\F}((t^{\Gamma_m}))$ is closed under Artin-Schreier extensions. An alternative proof was given by Poonen in \cite[Corollary 7]{MR1225257}, which uses the following argument. Let $S$ be the set of automorphisms on $\F((t^\Q))$ given by sending $\sum_{\gamma}a_{\gamma}t^{\gamma}$ to $\sum_{\gamma}\zeta(\gamma)a_{\gamma}t^{\gamma}$, where $\zeta$ is a homomorphism from $\Q/\Z$ to the roots of unity in $\F$. Then $\bigcup_{m=1}^\infty\xoverline{\F}((t^{\Gamma_m}))$ is the subfield of $\F((t^\Q))$ consisting of elements with finite orbits under the action of $S$. This is an algebraically closed field by \cite[Lemma 5]{MR1225257}. It was noted by Konstantinos Kartas that effectiveness also follows from the proof by Poonen, since high ramification away from $p$ gives too many roots of $f$.
    \end{remark}
    
    We obtain the following analogue of Rayner's result for the residue field.
    \begin{theorem}
    \label{thm: expanding bounded}
     Let $\F$ be a perfect field of characteristic $p$ and let $P(X)\in\F_p(t)[X]$
    be an additive polynomial. Let $\zeta\in \Fbar$. If $x\in\F((t^\Q))$ is a root of $P$ and $r\in\Q$ is such that $\zeta$ expands $x$ at $r$, then $r$ is a point of intersection of $P$.
    \end{theorem}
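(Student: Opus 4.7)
The plan is to mirror the proof of Theorem \ref{thm: ramifies at intersection}. Assume for contradiction that $r$ is not a point of intersection of $P$, so $\lvert J(r) \rvert = 1$; say $J(r) = \{p^\ell\}$. Since $x_{<r} + \zeta t^r = x_{\le r}$ is a better approximation than $x_{<r}$ of the root $x$, Lemma \ref{lem: necessary approximation condition} forces $\zeta t^r$ to be an approximation term for $x_{<r}$ with respect to $P$. Because $P$ is additive and $J(r)$ is a singleton, the approximation term equation \textup{(\ref{eqn: approximation term})} collapses to
$$b_\ell \zeta^{p^\ell} = -b,$$
where $b_\ell \in \F_p$ is the initial coefficient of $a_\ell$ and $b t^{\gamma(r)}$ is the initial term of $P(x_{<r})$.

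Since $P$ has coefficients in $\F_p \subseteq K$ and $x_{<r} \in K((t^\Q))$, it follows that $P(x_{<r}) \in K((t^\Q))$, so $b \in K$. Combined with $b_\ell \in K$, this gives $\zeta^{p^\ell} = -b/b_\ell \in K$. The contradiction with $\zeta \notin K$ will then follow once I verify that $K$ is perfect, since $\zeta$ is the unique $p^\ell$-th root of $-b/b_\ell$ in any extension and must therefore already lie in $K$.

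To establish perfection of $K$, I use that $x$ is algebraic over $\F_p(t)$, being a root of $P$. Expanding $P(x) = 0$ coefficient by coefficient in the Hahn field yields, for each exponent $e \in \Q$, a polynomial equation over $\F_p$ constraining the coefficients $c_\rho$ of $x$; a straightforward induction on the well-ordered support of $x$ shows that each $c_\rho$ lies in $\Fpbar$. Therefore $K \subseteq \Fpbar$, and since every subfield of $\Fpbar$ is a directed union of finite subfields, $K$ is perfect.

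I expect the main obstacle to be this subsidiary fact that coefficients of $x$ lie in $\Fpbar$, which reflects the structure of algebraic elements in Hahn fields over $\F_p(t)$; once this is in hand, the rest of the argument is formal.
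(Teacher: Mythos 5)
Your proof is correct and takes essentially the same route as the paper: assume $\lvert J(r)\rvert=1$, invoke Lemma \ref{lem: necessary approximation condition} to see that $\zeta t^r$ must be an approximation term for $x_{<r}$ with respect to $P$, deduce from additivity that $\zeta^{p^\ell}\in K$, and conclude $\zeta\in K$ from perfection of $K$, contradicting that $\zeta$ expands $x$ at $r$. The only difference is that you justify the perfection of $K$ by showing the coefficients of $x$ lie in $\Fpbar$ (your transfinite induction is sound, since coefficients at exponents $\rho'>\rho$ cannot contribute to the coefficient of $t^{\gamma_x(\rho)}$ in $P(x)$), whereas the paper simply asserts that $K$ is perfect as an algebraic extension of the prime field.
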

    
    \begin{proof}
    Write $w=x_{<r}$. If $\lvert J(r)\rvert=1$, then $\zeta t^r$ is an approximation term for $x_{<r}$ and $\zeta$ satisfies an equation of the form $\zeta^{p^i}=b$, where $b\in K^*$. Since $K$ is an algebraic extension of $\F$, it is perfect and so $\zeta\in K$ This contradicts the assumption that $\zeta$ expands $x$ at $r$. Hence, $\lvert J(r)\rvert>1$, and $r$ is a point of intersection of $P$.
    \end{proof}
    
\begin{corollary}
    \label{cor: finite expansion away from p roots}
There is an algorithm $\maxexp$ which takes as input a polynomial $f(X)\in\F_p(t)[X]$ and outputs a natural number $m$ such that any root of $f$ in $\FbarQ$ is already in $\F_{p^m}((t^\Q))$.
\end{corollary}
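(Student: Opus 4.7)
The plan is to mirror the proof of Corollary \ref{cor: finite ramification away from p}, substituting Theorem \ref{thm: expanding bounded} for Theorem \ref{thm: ramifies at intersection}. First I will set $n := \deg f$ and compute $P := \addpol(f) = \sum_{i=0}^n a_i X^{p^i}$, an additive polynomial in $\F_p(t)[X]$ of degree $p^n$ divisible by $f$. Using the algorithm from Remark \ref{rem: number of intersection points}, I will effectively list the finite set $S$ of intersection points of $P$, with $|S| \le n(n+1)/2 + 1$. Since any root $x \in \FbarQ$ of $f$ is also a root of $P$, it will suffice to bound the subfield of $\Fpbar$ generated by the coefficients of such an $x$.

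Next I will invoke Theorem \ref{thm: expanding bounded} with the perfect field $\F = \F_p$: expansions of $x$ occur only at points in $S$. Concretely, if $r$ lies in the support of $x$ but $r \notin S$, the coefficient $\zeta_r$ satisfies a one-term equation $b_{p^k} \zeta_r^{p^k} = c$ with $c$ in the field $K$ generated by the prior coefficients of $x$, and perfectness of $K$ forces $\zeta_r \in K$. At each $r \in S$, Lemma \ref{lem: necessary approximation condition} will give $\zeta_r$ as a root of either the $\F_p$-equation $\sum_{p^k \in J(r)} b_{p^k} Y^{p^k} = 0$ (case (a)) or of $\sum_{p^k \in J(r)} b_{p^k} Y^{p^k} = c$ with $c$ in the current field (case (b), since the leading coefficient $c$ of $P(x_{<r})$ arises by substituting coefficients of $x_{<r}$ into the $\F_p(t)$-coefficients of $P$). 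Either way, $\zeta_r$ is algebraic of degree at most $p^n$ over the previous field.

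Iterating through the intersection points, the field generated by all coefficients of $x$ will have degree at most $M := (p^n)^{|S|}$ over $\F_p$, hence be contained in $\F_{p^{M!}}$ since every $d \le M$ divides $M!$. The algorithm $\maxexp$ can therefore return $m := M!$ (or any analogous uniform bound computed from $P$ and $S$). The main obstacle will be handling case (b) at an intersection point carefully: the right-hand side of the additive equation for $\zeta_r$ lives in an intermediate subfield of $\Fpbar$ rather than in $\F_p$ itself, so the extension must be taken over that intermediate field, and I need to verify that the uniform degree bound $p^n$ propagates correctly through the iteration without inflating beyond $(p^n)^{|S|}$.
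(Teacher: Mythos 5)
Your proposal is correct and follows essentially the same route as the paper's own proof: pass to the additive polynomial $\addpol(f)$, use the dichotomy $\lvert J(r)\rvert=1$ versus $\lvert J(r)\rvert>1$ (i.e.\ Theorem \ref{thm: expanding bounded} together with Lemma \ref{lem: necessary approximation condition}) to see that the coefficient field, which is perfect, can only grow at the finitely many points of intersection and there only by degree at most $p^n$, and then output a factorial-type bound so that all possible intermediate fields embed in one $\F_{p^m}$. The only differences are cosmetic: the paper returns $u!$ with $u=\prod_{i=1}^n p^i$ rather than your $M!$ (any computable bound suffices), and Theorem \ref{thm: expanding bounded} should be applied with $\F=\Fpbar$ (or any perfect field containing the coefficients of $x$) rather than $\F=\F_p$, while your flagged ``obstacle'' in case (b) is handled exactly as you suggest, since the additive equation over the current coefficient field still has degree at most $p^n$.
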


\begin{proof}
Let $\deg(f)=n$ and let $P(X)=\addpol(f)$. Write $P(X)=\sum_{i=0}^na_iX^{p^i}$. Suppose that $x\in \FbarQ$ is a root of $f(X)$, so in particular a root of $P(X)$. Let $r$ be in the support of $x$ and let $w=x_{<r}$. Suppose that $w\in \F_{p^u}((t^\Q))$ for some $u\in \N$.
Let $\zeta t^r$ be the initial term of $x-w$. Suppose that $\lvert J(r)\rvert = 1$. Then $\zeta$ satisfies an equation of the form
$$\zeta^{p^\ell}=a$$
where $a\in \F_{p^u}$. Since $\F_{p^u}$ is perfect, we get that $\zeta\in \F_{p^u}$. If $\lvert J(r)\rvert > 1$, then $r$ is a point of intersection of $P$ and $\zeta$ satisfies an equation of the form
$$\sum_{j\in J(r)}a_j\zeta^{p^j}=a$$
with $a\in \F_{p^u}$. This shows that $a$ has degree at most $p^k$ over $\F_{p^u}$, with $k$ being maximal such that $\gamma_k\in J(r)$. In other words, $a$ is contained in $\F_{p^{mk!}}$.

Since the initial term $\zeta_0 t^{v(x)}$ of $x$ is a better approximation of $x$ than $0$ and since $P(0)\in \F_p$, we get in particular that the $\zeta_0$ has degree at most $p^{k_0}$ over $\F_p$, with $k_0$ being maximal such that $\gamma_{k_0}\in J(v(x))$. This serves as the base case for concluding by transfinite induction that $x\in \F_{p^{u!}}((t^\Q))$, where $u=\prod_{i=1}^np^i$ over $\F_p$. Having $\maxexp$ returning $u!$ on input $f$ thus gives the desired result.
\end{proof}

\begin{remark}
Note that the bound obtained in the proof of Corollary \ref{cor: finite expansion away from p roots} cannot be sharp. It assumes that for every $k\in\{1,\ldots,n\}$, there is a point of intersection of $P$ such that $k$ is maximal among the $i$ such that $\gamma_i\in J(r)$. This is impossible since there cannot be $n$ points of intersection of $P$. It is possible to have $\maxram$ return a sharper bound, taking into account the points of intersection of $P$.
\end{remark}

\bibliographystyle{alpha}
\bibliography{biblio2}

\Addresses

\end{document}